\def\0{\{0\}}
\let\noi=\noindent
\let\sse=\subseteq
\def\span{{\rm span}}
\def\Real{{\rm Re\kern1pt}}
\def\smallfrac#1#2{{\textstyle{\frac{#1}{#2}}}}
\def\B{{\mathcal B}}
\def\H{{\mathcal H}}
\def\N{{\mathcal N}}
\def\R{{\mathcal R}}
\def\X{{\mathcal X}}
\def\CC{{\mathbb C\kern.5pt}}
\def\smallmatrix#1{\null\,\vcenter{
   \baselineskip=8pt\mathsurround=0pt\ialign{
   \hfil ${\scriptstyle##}$
   \hfil &&
   \hfil ${\scriptstyle##}$
   \hfil \crcr
   \mathstrut \crcr
   \noalign{\kern-\baselineskip}#1 \crcr
   \mathstrut \crcr
   \noalign{\kern-\baselineskip} \crcr }}\!}
\newtheorem{theorem}{Theorem}
\newtheorem{corollary}{Corollary}
\theoremstyle{definition}
\newtheorem{remark}{Remark}
\numberwithin{theorem}{section}
\numberwithin{lemma}{section}
\numberwithin{corollary}{section}
\numberwithin{proposition}{section}
\numberwithin{conjecture}{section}
\numberwithin{definition}{section}
\numberwithin{remark}{section}
\numberwithin{question}{section}
\numberwithin{example}{section}
\numberwithin{equation}{section}
\begin{document}

\vglue-55pt
\hfill
{\it Linear and Multilinear Algebra}\/,
{\bf 72}(18) (2024) 3217--3230

\vglue25pt
\title{Forms of Biisometric Operators and Biorthogonality}
\author{B.P. Duggal}
\address{Faculty of Sciences and Mathematics, University of Ni\v s, Serbia}
\email{bpduggal@yahoo.co.uk}
\author{C.S. Kubrusly}
\address{Catholic University of Rio de Janeiro, Brazil}
\email{carlos@ele.puc-rio.br}
\renewcommand{\keywordsname}{Keywords}
\keywords{$(m,P)$-biisometric operators, $P$-biisometric operators,
$P$-biorthogonal sequences}
\dedicatory{Dedicated to the memory of our friend and colleague Nhan Levan
            $($1936--2021$)$}
\subjclass{47A05, 47A30, 47B49, 46B15}
\date{May 31, 2023; revised, December 9, 2023}

\begin{abstract}
The paper proves two results involving a pair $(A,B)$ of $P$-biisomet\-ric or
$(m,\!P)$-biisometric Hilbert-space operators for arbitrary positive integer
$m$ and positive operator $P.$ It is shown that if $A$ and $B$ are power
bounded and the pair $(A,B)$ is $(m,P)$-biisometric for some $m$, then it is a
$P$-biisometric pair$.$ The important case when $P$ is invertible is
treated in detail$.$ It is also shown that if $(A,B)$ is $P$-biisometric,
then there are biorthogonal sequences with respect to the inner product
${\<\,\cdot\,;\cdot\,\>_P}={\<P\,\cdot\,;\cdot\,\>}$ that have a shift-like
behaviour with respect to this inner product.
\end{abstract}

\maketitle

\vskip-10pt\noi
\section{Introduction}

Let ${(\H,\<\,\cdot\,;\cdot\,\>)}$ be a Hilbert space and let $A$ and $B$ be
Hilbert-space operators$.$ They are said to make a biisometric pair if
${A^*B=I}$, where $I$ is the identity operator$.$ This extends the notion of
a Hilbert-space isometry$:$ an operator $A$ is an isometry if and only if
${A^*A=I}.$ Biisometric pairs have been investigated in \cite{KL1}$.$ We show
here that this is connected to the notion of $(m,P)$-biisometric pairs and, in
particular, to the notion of $P$-biisometric pairs, where $m$ is a positive
integer and $P$ is a positive operator.

\vskip6pt
The original results in this paper are stated and proved in Sections 4, 5 and
6, viz., Theorems 4.1, 5.1 and 6.1$.$ Let $P$ be a positive operator$.$ In
Theorem 4.1 we show that if $(A,\kern-1ptB)$ is an
$(m,\kern-1ptP)$-biisometric pair for some $m$, and $A$ and $B$ are power
bounded, then $(A,B)$ is a $P$-biisometric pair; so that if in addition $P$ is
invertible, then $A$ and $B$ are similar to a biisometric pair$.$ In
Theorem 5.1 we extend the results in \cite{KL1}, from biisometric pairs to an
arbitrary $P$-biisometric pair, exhibiting a pair
$\big\{\{\phi_n\},\{\psi_n\}\big\}$ of biorthonormal sequences with values in
the inner product space ${(\H,\<\,\cdot\,;\cdot\,\>_P)}$, with inner product
${\<\,\cdot\,;\cdot\,\>_P=\<P\,\cdot\,;\cdot\,\>}$, which have a shift-like
behaviour$.$ Theorem 6.1 gives a comprehensive account of the case when the
positive operator $P$ is invertible (with a bounded inverse) for power bounded
\hbox{operators $A$ and $B$}.

\vskip6pt
All terms and notation used above will be defined in the next section$.$ The
paper is organised as follows$.$ Basic notation and terminology are summarised
in Section 2$.$ Forms of biisometric operators, including $P$-biisometric and
${(m,P)}$-biisometric pairs, are considered in Section 3$.$ Sections 4 and 5
contain the main results of the paper as discussed above$.$ The particular
case when the injective positive $P$ is invertible (with a bounded inverse)
closes the paper in Section 6.

\section{Basic Notation and Terminology}

Throughout this paper ${(\H,\<\,\cdot\,;\cdot\,\>)}$ stands for a complex
Hilbert space, equipped with an inner product ${\<\,\cdot\,;\cdot\,\>}$
generating the norm ${\|\cdot\|}.$ Let ${A\!:\H\to\H}$ be an operator (i.e.,
a bounded linear transformation) of ${(\H,\<\,\cdot\,;\cdot\,\>)}$ into itself
--- referred to as a Hilbert-space operator, or an operator on $\H.$ The
normed algebra of all operators on a normed space $\X$ will denoted by
$\B[\X]$, and so $\B[\B[\X]\kern.5pt]$ stands for the normed algebra of all
bounded linear transformations of $\B[\X]$ into itself (sometimes referred to
as transformers)$.$ Let $I$ stand for the identity operator and $O$ for the
null operator on any linear space$.$ We use the same notation ${\|\cdot\|}$
for the induced uniform norm on $\B[\X]$ for any normed space $\X.$ An
operator $A$ on a normed space $\X$ is power bounded if
${\sup_n\|A^n\|<\infty}$ (which means ${\sup_n\|A^nx\|<\infty}$ for every
${x\in\X}$ if $\X$ is a Banach space by the \hbox{Banach}--Steinhaus
Theorem)$.$ The adjoint of an operator $A$ on any (complex) Hilbert space $\H$
will be denoted by $A^*.\!$ The kernel and range of an operator $A$ on $\H$
will be denoted by $\N(A)$ and $\R(A)$, where $\N(A)$ is a subspace (i.e., a
closed linear manifold) and $\R(A)$ is a linear manifold of
${(\H,\<\,\cdot\,;\cdot\,\>)}$, respectively$.$ Recall that
${\R(A)^-\!=\N(A^*)^\perp}$, where the superscripts $^-$ and $^\perp$ stand
for closure and orthogonal complement in a Hilbert space
${(\H,\<\,\cdot\,;\cdot\,\>)}$, respectively$.$ A Hilbert-space operator $P$
is self-adjoint if ${P^*\!=P}$ (equivalently, if ${\<Px\,;x\>}$ is real for
every ${x\in\H}).$ A self-ad\-joint operator $P$ on Hilbert space is
nonnegative if ${\<Px\,;x\>\ge0}$ for every ${x\in\H}$, and positive if
${\<Px\,;x\>>0}$ for every nonzero ${x\in\H}$ (equivalently, if it is
nonnegative and injective)$.$ An invertible positive operator (with a bounded
inverse; i.e., non\-negative, injective and surjective after the Open Mapping
Theorem) is sometimes referred to as a strictly positive operator$.$ If $P$ is
positive, then (as it is self-adjoint and injective) it is left-invertible
with a dense range (since $\R(P)^-\!=\N(P)^\perp=\0^\perp\!=\H$), so has a
left inverse ${P^{-1}\!:\R(P)\to\H}$, which is bounded if and only if it is
surjective (i.e., if and only it is invertible)$.$ For any positive (i.e.,
injective nonnegative) $P$, the form defined by
$$
\<\,\cdot\,;\cdot\,\>_P=\<P\,\cdot\,;\cdot\,\>\!:\H\times\H\to\CC
$$
is an inner product generating a norm ${\|\cdot\|_P}$, so that
${(\H,\<\,\cdot\,;\cdot\,\>_P)}$ is an inner product space$.$ If $P$ is
strictly positive (i.e., invertible nonnegative), then
${(\H,\<\,\cdot\,;\cdot\,\>_P)}$ is a Hilbert space (i.e., it remains
complete)$.$ Let $\span\,S$ denotes the linear span of an arbitrary set
${S\sse\H}$ and let $\bigvee\!S=(\span\,S)^-$ denotes the closure of
$\span\,S.$

\vskip6pt
Biorthogonal sequences were introduced in the context of basis for separable
\hbox{Banach} spaces \cite[Definition 1.4.1]{S}, \cite[Definition 1.f.1]{LT}
(and so $\H$ is {\it separable}\/ if it is spanned by a sequence)$.$ In
a Hilbert space setting (where dual pairs are identified with inner products
after the Riesz Representation Theorem for Hilbert spaces), the notion of
biorthogonality is defined in terms of the inner product$.$ This is extended
for the case of an inner product space ${(\H,\<\,\cdot\,;\cdot\,\>)}$,
as follows.

\vskip6pt\noi
{\narrower{
Two sequences $\{f_n\}$ and $\{g_n\}$ of vectors in an inner product space
${(\H,\<\,\cdot\,;\cdot\,\>)}$ are said to be {\it biorthogonal}\/ (to each
other) if ${\<f_m;g_n\>}=\delta_{m,n}$ where $\delta$ stands for the Kronecker
delta function$.$ If $\{f_n\}$ is such that there exists a sequence $\{g_n\}$
for which $\{f_n\}$ and $\{g_n\}$ are biorthogonal, then it is said that
$\{f_n\}$ {\it admits a biorthogonal sequence}\/ (symmetrically, $\{g_n\}$
admits a biorthogonal sequence), and $\big\{\{f_n\},\{g_n\}\big\}$ is referred
to as a {\it biorthogonal pair}\/ or a {\it biorthogonal system}\/.
}\vskip0pt}

\vskip6pt
If, in addition, ${\|f_n\|=\|g_n\|=1}$ for all $n$, then
$\big\{\{f_n\},\{g_n\}\big\}$ might be said to be a biorthonormal pair$.$
However, it has been shown in \cite[Corollary 2.1]{KL1} that {\it there is no
distinct pair of biorthonormal sequences}$.$ In other words, if two sequences\/
$\{f_n\}$ and\/ $\{g_n\}$ are biorthogonal and if\/ $\|f_n\|=\|g_n\|=1$ for
all\/ $n$, then\/ ${f_n=g_n}$ for all\/ $n.$ Also, if an arbitrary pair
$\{f_n\}$ and $\{g_n\}$ of biorthogonal sequences is such that $f_n=g_n$ for
all $n$, then we get the usual definition of an orthonormal sequence, although
in general neither $\{f_n\}$ nor $\{g_n\}$ are orthogonal \hbox{(much less
orthonormal) sequences.}

\vskip6pt
A sequence $\{f_n\}$ spanning the whole space $\H$ is sometimes called
{\it total}\/$.$ This means $\bigvee\{f_n\}=\H$ (and $\H$ is separable in this
case)$.$ It was pointed out in \cite{You} that {\it if\/ $\{f_n\}$ admits a
biorthogonal sequence\/ $\{g_n\}$, then\/ $\{g_n\}$ is unique if and only if\/
$\{f_n\}$ is total}\/$.$ We will be concerned with total sequences in the
proof of \hbox{Corollary 5.1}.

\vskip6pt
Replacing ${\<\,\cdot\,;\cdot\,\>}$ with ${\<\,\cdot\,;\cdot\,\>_P}$, for some
invertible (or simply injective) nonnegative operator $P$, we get the
definition of a {\it $P$-biorthogonal pair}\/.

\section{Forms of Biisometric Operators}

Let $A$ be an operator on a Hilbert space $\H$, and let $m$ be a positive
integer$.$ There is a myriad of equivalent definitions for a Hilbert-space
isometry$.$ The one that fits our needs here reads as follows: an operator
$A$ is an {\it isometry}\/ if
$$
A^*A=I
\qquad ({\rm i.e.,} \;A^*A-I=O).
$$
By replacing ``$=$'' with ``$\le$'' we get another expression defining a
contraction$.$ Perhaps the above displayed form for an isometry has been
popularised in \cite{Hal}$.$ It seems that the notion of $m$-isometry appeared
in the last decade of the past century \cite{AS}, and a considerable number of
research papers dealing with several aspects of it has been noticed recently
(see, e.g., \cite{Dug1,Suc2})$.$ An operator $A$ is an {\it $m$-isometry}\/ if

$$
{\sum}_{j=0}^m(-1)^j\Big(\smallmatrix{m \cr
                                      j \cr}\Big)A^{*(m-j)}A^{m-j}=O
$$
for some positive integer $m.$ A $1$-isometry is precisely a plain isometry$.$
On the other hand, but still along the same line, there also is the notion of
$P$-isometry with respect to an injective nonnegative operator $P$: An
operator $A$ is a {\it $P$-isometry}\/ if
$$
A^*PA=P
\qquad ({\rm i.e.,} \;A^*PA-P=O),
$$
reducing to a plain isometry if $P$ is the identity operator$.$ (For recent
papers dealing with $P$-isometry --- and its variations as, for instance,
$P$-contractions, see, e.g., \cite{Suc1,KD3})$.$ The above two notions prompt
the next one$.$ An operator $A$ is an {\it $(m,P)$-isometry}\/ (for a positive
integer $m$ and a positive operator $P$) if
$$
{\sum}_{j=0}^m(-1)^j\Big(\smallmatrix{m \cr
                                      j \cr}\Big)A^{*(m-j)}PA^{m-j}=O.
$$
Again, a $(1,P)$-isometry is $P$-isometry$.$ (For $(m,P)$-isometries and their
variations, such as $(m,P)$-expansive operators where the ``$=$'' sign is
replaced by ``$\le$'', see, e.g., \cite{DKim1,Tri})$.$ The above notions are
extended to a pair of operators as follows$.$ Let $B$ be another operator on
$\H.$ Operators $A$ and $B$ are said to make a \hbox{{\it biisometric pair}\/
\cite{KL1} if}
$$
A^*B=I 
\qquad ({\rm i.e.,} \;A^*B-I=O).
$$
Equivalently, if ${B^*A=I}.$ The pair $(A,B)$ is said to be an 
{\it $m$-biisometric pair}\/ if
$$
{\sum}_{j=0}^m(-1)^j\Big(\smallmatrix{m \cr
                                      j \cr}\Big)A^{*(m-j)}B^{m-j}=O
$$
for a positive integer $m$ (see, e.g., \cite{DKim2})$.$ We say that $A$ and
$B$ make a {\it $P$-biisometric pair}\/ if, for a positive operator $P$,
$$
A^*PB=P 
\qquad ({\rm i.e.,} \;A^*PB-P=O).
$$
The above two expressions naturally lead to the notion of
{\it $(m,P)$-biisometric pair}\/:
$$
{\sum}_{j=0}^m(-1)^j\Big(\smallmatrix{m \cr
                                      j \cr}\Big)A^{*(m-j)}PB^{m-j}=O,
$$
where an $(m,I)$-biisometric is $m$-biisometric and a $(1,P)$-biisometric is
$P$-biisomet\-ric$.$ The present paper focuses on the last two notions.

\section{On $(m,P)$-Biisometric and $P$-Biisometric Pairs}

Given operators $A,B$ in $\B[\H]$, let ${L_A,R_B\in\B[\B[\H]]}$ denote the
operators of left multiplication by $A$ and, respectively, right
multiplication by $B$, given by
$$
L_A(X)=AX
\quad\hbox{and}\quad
R_B(X)=XB
\quad\hbox{for every}\quad
X\in\B[\H].
$$
Then set
\begin{eqnarray*}
\triangle^m_{A^*,B}(P)
&=&
(L_{A^*}R_B-I)^m(P)                                                     \\
&=&
\Big({\sum}_{j=0}^m(-1)^j\Big(\smallmatrix{m \cr
                                           j \cr}\Big)
(L_{A^*}R_B)^{m-j}\Big)(P)                                              \\
&=&
{\sum}_{j=0}^m(-1)^j\Big(\smallmatrix{m \cr
                                      j \cr}\Big)A^{*(m-j)}PB^{m-j},
\end{eqnarray*}
so that the pair $(A,B)$ is $(m,P)$-biisometric if and only if
$\triangle^m_{A^*,B}(P)=0.$ Since 
$$
\triangle^m_{A^*,B}(P)
=(L_{A^*}R_B-I)\big(\triangle^{m-1}_{A^*,B}(P)\big)
=L_{A^*}R_B\big(\triangle^{m-1}_{A^*,B}(P)\big)-\triangle^{m-1}_{A^*,B}(P),
$$
if a pair $(A,B)$ is $(m,P)$-biisometric, then
\begin{eqnarray*}
&&
\triangle^{m-1}_{A^*,B}(P)=A^*\triangle^{m-1}_{A^*,B}(P)B                \\
&\Longrightarrow&
\triangle^{m-1}_{A^*,B}(P)
=A^*\triangle^{m-1}_{A^*,B}(P)B
={A}^{*2}\triangle^{m-1}_{A^*,B}(P)B^2                                   \\
&\Longrightarrow&
\triangle^{m-1}_{A^*,B}(P)
=A^*\triangle^{m-1}_{A^*,B}(P)B
=\cdots
=A^{*n}\triangle^{m-1}_{A*,B}(P)B^n
\end{eqnarray*}
for all positive integers $n.$ In particular,
$$
\hbox{\it a $P$-biisometric pair $(A,B)$ is $(m,P)$-biisometric for positive
integers $m$}.
$$
Does the converse hold$?$ The answer given here, in the absence of any
additional hypotheses, is an emphatic ``no''$:$ for example, if
$A=B=\big(\smallmatrix{1 & 1 \cr
                       0 & 1 \cr}\big)$ and $P$ is the positive operator
$P=\big(\smallmatrix{1 & 1 \cr
                     1 & 2\cr}\big)$, then $A$ is a $(3,P)$-isometry but not
a $P$-isometry (i.e., the pair $(A,A)$ is $(3,P)$-isometric but not
$P$-isometric)$.$ The following \hbox{theorem says that}
\vskip6pt\noi
{\narrower\narrower
{\it a necessary and sufficient condition for an $(m,P)$-biisometric pair
$(A,B)$ to be $P$-biisometric, for a positive ${P\in B[\H]}$, is that
$L_{A^*}R_B$, given by $(L_{A^*}R_B)(P)=A^*PB$, is power bounded.}
\vskip6pt}
\noi

\begin{remark}
\label{remark}
Take arbitrary operators ${A,B}$, let the $L_A$ be the left multiplication
by A, and $R_B$ be the right multiplication by $B$, as defined above$.$ The
commutativity of $L_A$ and $R_B$ ensures that
$$
(L_AR_B)^n=L_A^nR_B^n=L_{A^n}R_{B^n},
$$
\vskip-4pt\noi
and so
\vskip4pt\noi
$$
(L_{A}R_B)^n(X)
=(L_{A}^nR_B^n)(X)
=(L_{A^n}R_{B^n})(X)
=A^nXB^n
$$
for every nonnegative integer $n$ and every operator $X$. Since 
\begin{eqnarray*}
\triangle^m_{A^n,B^n}(X)
&=&(L_{A^n}R_{B^n}-I)^m(X)
=(L^n_AR^n_A-I)^m(X)                                                        \\
&=&
\Big[(L_AR_B-I)^m{\sum}_{j=0}^{m(n-1)}{\alpha_j}(L_AR_B)^{m(n-1)-j}\Big](X) \\
&=&
{\sum}_{j=0}^{m(n-1)}{\alpha_j}(L_AR_B)^{m(n-1)-j}(\triangle^m_{A,B}(X))
\end{eqnarray*}
for some scalars $\alpha_j$, if $(A^*,B)$ is $(m,X)$-biisometric, then
$(A^{*n},B^n)$ is $(m,X)$-biiso\-metric for all positive integers $n$. Again,
since
$$
\|(L_AR_B)^n\|
={\sup}_{X\ne O}\smallfrac{\|(L_AR_B)^n(X)\|}{\|X\|}
={\sup}_{X\ne O}\smallfrac{\|A^n\kern-1ptXB^n\|}{\|X\|}
\le{\sup}_n\|A^n\|\,{\sup}_n\|B^n\|,
$$
$L_AR_B$ is power bounded whenever both $A$ and $B$ are power bounded$.$ Since
an operator $A$ is power bounded if and only if its adjoint $A^*$ is power
bounded, power boundedness of $A$ and $B$ implies power boundedness of
$L_{A^*}R_B$, which is the condition of the next theorem.
\end{remark}

\begin{theorem}
\label{theorem0}
Let\/ $A$ and\/ $B$ be operators on a Hilbert space\/
$(\H,\<\,\cdot\,;\cdot\,\>)$ such that the pair\/ $(A,B)$ is\/
$(m,P)$-biisometric for some positive integer\/ $m$ and positive operator\/
$P.$ If\/ $L_{A^*}R_B$ is power bounded, then the pair\/ $(A,B)$ is\/
$P$-biisometric.
\end{theorem}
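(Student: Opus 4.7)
The plan is to set $T=L_{A^*}R_B\in\B[\B[\H]]$, so that $\triangle^k_{A^*,B}(P)=(T-I)^k P$ for every $k\ge 0$; the hypothesis of $(m,P)$-biisometricity then reads $(T-I)^m P=O$, while the desired conclusion is $(T-I)P=O$. Power boundedness of $L_{A^*}R_B$ means that there is $M>0$ with $\|T^n\|\le M$, so that $\|T^n P\|\le M\|P\|$ for every $n$, and this uniform bound will do all the work.

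First I would expand $T^n=\bigl((T-I)+I\bigr)^n$ by the binomial theorem. Since $T-I$ and $I$ commute, and since $(T-I)^k P=O$ for every $k\ge m$, applying this expansion to $P$ truncates to
$$T^n P=\sum_{k=0}^{m-1}\binom{n}{k}\,\triangle^k_{A^*,B}(P),$$
a polynomial in $n$ of degree at most $m-1$ whose coefficients lie in $\B[\H]$. The leading (highest-$n$) term is $\frac{1}{(m-1)!}\,n^{m-1}\triangle^{m-1}_{A^*,B}(P)$ up to lower-order pieces.

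Next I would let $k^*$ be the largest index in $\{0,1,\dots,m-1\}$ with $\triangle^{k^*}_{A^*,B}(P)\ne O$; such a $k^*$ exists because $\triangle^0_{A^*,B}(P)=P\ne O$. Dividing the displayed identity by $n^{k^*}$ and letting $n\to\infty$ gives, in the norm of $\B[\H]$,
$$\tfrac{1}{n^{k^*}}\,T^n P\;\longrightarrow\;\tfrac{1}{k^*!}\,\triangle^{k^*}_{A^*,B}(P),$$
because $\binom{n}{k}/n^{k^*}\to 0$ for $k<k^*$ and $\binom{n}{k^*}/n^{k^*}\to 1/k^*!$. If $k^*\ge 1$, the left-hand side tends to $O$ by the uniform bound $\|T^n P\|\le M\|P\|$, contradicting $\triangle^{k^*}_{A^*,B}(P)\ne O$. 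Hence $k^*=0$, which is precisely $\triangle^1_{A^*,B}(P)=A^*PB-P=O$, i.e.\ $(A,B)$ is $P$-biisometric.

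The main step is recognising that $(T-I)^m P=O$ together with the binomial identity forces polynomial growth in $n$ for $T^n P$, and then pitting that growth against the uniform bound coming from power boundedness; every other verification is routine. An equivalent packaging is a downward induction on $m$: from the leading term alone one reads off $n^{-(m-1)}T^n P\to\frac{1}{(m-1)!}\triangle^{m-1}_{A^*,B}(P)$, so power boundedness yields $\triangle^{m-1}_{A^*,B}(P)=O$, meaning that $(A,B)$ is already $(m-1,P)$-biisometric; iterating the same step $m-1$ times descends to $P$-biisometricity.
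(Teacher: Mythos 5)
Your proof is correct and follows essentially the same route as the paper: both rewrite $A^{*n}PB^{n}=(L_{A^*}R_B)^nP$ as the polynomial $\sum_{k=0}^{m-1}\binom{n}{k}\triangle^k_{A^*,B}(P)$ in $n$ (you via the binomial theorem directly, the paper via an equivalent induction on the identity $(a-1)^t=a^t-\sum_{j=0}^{t-1}\binom{t}{j}(a-1)^j$) and then play this polynomial growth against the uniform bound coming from power boundedness to force the top nonzero coefficient to vanish. Your one-shot contradiction with the largest nonzero index $k^*$ is just a compressed packaging of the paper's iterated descent from $m-1$ down to $1$, as you yourself note at the end.
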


\proof
The easily proved (use an induction argument) identity
$$
(a-1)^t=a^t-{\sum}_{j=0}^{t-1}\Big(\smallmatrix{t \cr
                                                j \cr}\Big)(a-1)^j
$$
\vskip-2pt\noi
for all positive integers $t$ implies
\begin{eqnarray*}
\triangle^m_{A^*,B}(P)
&=&
(L_{A^*}R_B-I)^m(P)                                                      \\
&=&
(L_{A^*}R_B)^m(P)-{\sum}_{j=0}^{m-1}\Big(\smallmatrix{m \cr
                                                      j \cr}\Big)
\triangle^j_{A^*,B}(P)                                                   \\
&=&
A^{*m}PB^m-{\sum}_{j=0}^{m-1}\Big(\smallmatrix{m \cr
                                               j \cr}\Big)
\triangle^j_{A^*,B}(P).
\end{eqnarray*}
Hence, if $(A,B)$ is $(m,P)$-biisometric, then
\begin{eqnarray*}
O
&=&
A^{*m}PB^m-{\sum}_{j=0}^{m-1}\Big(\smallmatrix{m \cr
                                               j \cr}\Big)
\triangle^j_{A^*,B}(P)                                                   \\
\Longrightarrow\quad
O
&=&
A^{*(m+1)}PB^{m+1}-{\sum}_{j=0}^{m-1}\Big(\smallmatrix{m \cr
                                                       j \cr}\Big)
A^*\triangle^j_{A^*,B}(P)B                                               \\
&=&
A^{*(m+1)}PB^{m+1}-{\sum}_{j=0}^{m-1}\Big(\smallmatrix{m \cr
                                                       j \cr}\Big)
\triangle^{j+1}_{A^*,B}(P)-{\sum}_{j=0}^{m-1}\Big(\smallmatrix{m \cr
                                                               j \cr}\Big)
\triangle^j_{A^*,B}(P)                                                   \\
&=&
A^{*(m+1)}PB^{m+1}-\Big(\smallmatrix{m   \cr
                                     m-1 \cr}\Big)
\triangle^{m}_{A^*,B}(P)-{\sum}_{j=0}^{m-1}\Big(\smallmatrix{m+1 \cr
                                                             j   \cr}\Big)
\triangle^j_{A^*,B}(P)                                                   \\
&=&
A^{*(m+1)}PB^{m+1}-{\sum}_{j=0}^{m-1}\Big(\smallmatrix{m+1 \cr
                                                       j   \cr}\Big)
\triangle^j_{A^*,B}(P).
\end{eqnarray*}
An induction argument now leads us to the conclusion that
\begin{eqnarray*}
O
&=&
A^{*n}PB^{n}-{\sum}_{j=0}^{m-1}\Big(\smallmatrix{n \cr
                                                 j \cr}\Big)
\triangle^j_{A^*,B}(P)                                                   \\
&=&
A^{*n}PB^n-\Big(\smallmatrix{n   \cr
                             m-1 \cr}\Big)\triangle^{m-1}_{A^*,B}(P)
-{\sum}_{j=0}^{m-2}\Big(\smallmatrix{n \cr
                                     j \cr}\Big)\triangle^j_{A^*,B}(P)   \\
\Longleftrightarrow\quad
A^{*n}PB^n
&=&
\Big(\smallmatrix{n   \cr
                  m-1 \cr}\Big)\triangle^{m-1}_{A^*,B}(P)
+{\sum}_{j=0}^{m-2}\Big(\smallmatrix{n \cr
                                     j \cr}\Big)
\triangle^j_{A^*,B}(P)
\end{eqnarray*}
for all integers $n\geq m.$ Assume that $L_{A^*}R_B$ is power bounded, which
means that there exists a positive scalar $M$ such that
${\|L_{A^{*n}}R_{B^n}\|\le M}$ for all $n.$ (Recall that the power boundedness
of ${L_{A^*}R_B}$ is guaranteed by the power boundedness of the operators $A$
and $B$.) Then
$$
\limsup_{n\to\infty}{\|A^{*n}PB^n}x\|\leq M\|P\|\,\|x\|
$$
and there exists a positive scalar $M_j$, dependent on $j$, such that
$$
\limsup_{n\to\infty}{\|\triangle^j_{A^*,B}(P)}x\|\leq M_j\|P\|\,\|x\|
$$
for all $x\in\H$. Observe that $\Big(\smallmatrix{n   \cr
                                                  m-1 \cr}\Big)$ is of the
order of $n^{m-1}$ and $\Big(\smallmatrix{n \cr
                                          j \cr}\Big)$, $\,{0\leq j\leq m-2}$,
is of the order of $n^{m-2}$; hence, letting ${n\to\infty}$ in
$$
\|{\triangle^{m-1}_{A^*,B}(P)}x\|\leq
{\frac{1}{\big(\smallmatrix{n   \cr
                            m-1 \cr}\big)}}
\left(\|A^{*n}PB^nx\|+\kern-1pt{\sum}_{j=0}^{m-2}
\Big(\smallmatrix{n \cr
                  j \cr}\Big)\|{\triangle^j_{A^*,B}(P)}x\|\right)
$$
we have
$$
\|{\triangle^{m-1}_{A^*,B}(P)}x\|=0
\;\;\hbox{for all}\;\;
x\in\H
\quad\Longleftrightarrow\quad
\triangle^{m-1}_{A^*,B}(P)=0.
$$
Repeating the argument a finite number of times, this implies
$$
\triangle_{A^*,B}(P)=0
\quad\Longleftrightarrow\quad
A^*PB=P.                                                       \eqno{\qed}
$$

\vskip0pt
$\!$Choosing ${A=B}$ in Theorem 4.1, with $P$ positive (i.e., if $A$ is a
$P$-isometry), we have ${A^*PA=(A^*P^{\frac{1}{2}})(P^{\frac{1}{2}}A)=P}$ and
$A$ is an isometry with respect to the norm ${\|\cdot\|_P}$ (i.e.,
${\|Ax\|_P=\|x\|_p}$ for every ${x\in\H}$ --- see, e.g.,
\cite[Proposition 4.1(b)]{KD3})$.$ So there exists an isometry $V$ such that
${A^*P^{\frac{1}{2}}=P^{\frac{1}{2}}V^*}.$ In other words, the operator $A$
is a $P$-isometry in the sense that
${\|P^{\frac{1}{2}}Ax\|=\|P^{\frac{1}{2}}x\|}$ for all ${x\in\H}.$ The
$P$-isometric property of $A$ for the case of a nonnegative $P$ does not imply
the left invertibility of $A.$ For example, if
$A=\Big(\smallmatrix{0 & 1 & 0 \cr
                     1 & 0 & 0 \cr
                     0 & 0 & 0 \cr}\Big)$ and $P$ is the nonnegative operator
$P=\Big(\smallmatrix{1 & 1 & 0 \cr
                     1 & 1 & 0 \cr
                     0 & 0 & 0 \cr}\Big)$, then $0$ is in the point spectrum
of $A$ and ${A^*PA=P}.$ Such a situation can not, however, arise if $P$ is a
positive (i.e., nonnegative and injective) operator$.$ For, in this case, if
${\{x_n\}\subset\H}$ is a sequence of unit vectors such that
${\lim_{n\rightarrow\infty}{\|({A-\lambda})x_n\|}=0}$ and $A$ is a $P$-isometry
(i.e., the pair $(A,A)$ is $(m,P)$-isometric), then
$$
\lim_{n\to\infty}{\<\triangle^m_{A^*,A}(P)x_n\,;x_n\>}
=(|\lambda|^2-1)^m\lim_{n\to\infty}{\langle Px_n\,;x_n \rangle}=0
$$
implies $(|\lambda|^2-1)=0$; that is, the approximate point spectrum of $A$
lies in the unit circle (hence, $A$ is left invertible).

\section{$P$-Biisometric Operators and Biorthogonal Sequences}

A $P$-biisometric pair of operators was defined in Section 3 and
biorthogonal sequences were defined in Section 2.

\begin{theorem}
\label{theorem1}
Let\/ $A\!$ and\/ $B$ be operators on\/ a Hilbert space\/
${(\H,\<\,\cdot\,;\cdot\,\>)}.$ Suppose they make a\/ $P$-biisometric pair
whose adjoints are noninjective,
$$
\N(A^*)\ne\0
\quad\;\hbox{and}\;\quad
\N(B^*)\ne\0,                                                 \leqno{\rm(i)}
$$
and there exists an injective nonnegative\/ $($i.e., positive\/$)$ operator\/
$P$\/ on\/ $\H$ for which
$$ 
A^*PB=P,
\qquad
\N(A^*)\cap\R(P)\ne\0,
\quad\hbox{and}\quad
\N(B^*)\cap\R(P)\ne\0.                                       
                                                              \leqno{\rm(ii)}
$$
Consider the inner product space\/ ${(\H,\<\,\cdot\,;\cdot\,\>_P)}$ with inner
product given by\/ ${\<\,\cdot\,;\cdot\,\>_P}={\<P\,\cdot\,;\cdot\,\>}.$ Take
arbitrary nonzero vectors
$$
v\in\N(A^*)\cap\R(P)
\quad\;\hbox{and}\;\quad
w\in\N(B^*)\cap\R(P)
$$
and, for each nonnegative integer $n$, consider the vectors
$$
\phi_n=A^nP^{-1}w
\quad\;\hbox{and}\;\quad
\psi_n=B^nP^{-1}v
$$
in\/ $\H.$ We claim that there exist\/ ${v\in\N(A^*)\cap\R(P)}$ and\/
${w\in\N(B^*)\cap\R(P)}$ such that the sequences\/ $\{\phi_n\}$ and\/
$\{\psi_n\}$ are biorthogonal on\/ ${(\H,\<\,\cdot\,;\cdot\,\>_P)}.$ Moreover,
$$
A\phi_n=\phi_{n+1}
\quad\;\hbox{and}\;\quad
B\psi_n=\psi_{n+1},
$$
$$
A^*(P\psi_{n+1})=P\psi_n
\quad\;\hbox{and}\;\quad
B^*(P\phi_{n+1})=P\phi_n.
$$
\end{theorem}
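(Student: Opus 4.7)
My plan has three stages: verify the shift identities, compute the $P$-inner product, and then pick $v,w$ so the biorthogonality is nondegenerate.

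\emph{Stage 1: shift identities.} The forward relations $A\phi_n=\phi_{n+1}$ and $B\psi_n=\psi_{n+1}$ are immediate from the definitions of $\phi_n$ and $\psi_n$. For $A^*(P\psi_{n+1})=P\psi_n$ I compute, using $A^*PB=P$ to absorb one copy of $B$,
\[
A^*(P\psi_{n+1}) = A^*PB^{n+1}P^{-1}v = (A^*PB)B^n P^{-1}v = PB^n P^{-1}v = P\psi_n,
\]
and the mirror identity $B^*(P\phi_{n+1})=P\phi_n$ follows symmetrically from the adjoint identity $B^*PA=P$.

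\emph{Stage 2: biorthogonality formula.} An induction on $A^*PB=P$ gives $A^{*k}PB^k=P$ for every $k\ge0$. Using self-adjointness of $P$,
\[
\langle\phi_m;\psi_n\rangle_P = \langle PA^mP^{-1}w;B^nP^{-1}v\rangle = \langle P^{-1}w;A^{*m}PB^n P^{-1}v\rangle.
\]
When $m=n$ the middle operator reduces to $P$ and the right-hand side becomes $\langle P^{-1}w;v\rangle$. When $m>n$, the factorisation $A^{*m}PB^n = A^{*(m-n)}(A^{*n}PB^n) = A^{*(m-n)}P$ together with $v\in\N(A^*)$ kills the expression, and the case $m<n$ is mirror-symmetric via $w\in\N(B^*)$. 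Hence $\langle\phi_m;\psi_n\rangle_P = \delta_{m,n}\,\langle P^{-1}w;v\rangle$.

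\emph{Stage 3: nondegenerate choice of $v,w$.} It remains to exhibit nonzero $v\in\N(A^*)\cap\R(P)$ and $w\in\N(B^*)\cap\R(P)$ with $\langle P^{-1}w;v\rangle\ne0$; a subsequent rescaling of $v$ then normalises the constant to $1$. I would secure this by observing that the $P$-biisometric identity rewrites as $\langle Ax;By\rangle_P = \langle B^*PAx;y\rangle = \langle x;y\rangle_P$ on the inner product space $(\H,\langle\cdot;\cdot\rangle_P)$, so $(A,B)$ is biisometric with respect to the $P$-inner product in exactly the sense treated in \cite{KL1}. This reduces the nondegeneracy of the form $(v,w)\mapsto\langle P^{-1}w;v\rangle$ to the analogous fact for ordinary biisometric pairs: assuming the $P$-kernels of $A^*$ and $B^*$ (which correspond via $P$ to $\N(A^*)\cap\R(P)$ and $\N(B^*)\cap\R(P)$) are mutually $P$-orthogonal places a nonzero vector from one kernel into the closure of the range of the other operator, after which the appropriate one-sided inverse identity forces it to vanish, contradicting its nontriviality.

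The main obstacle is precisely this last step, because $(\H,\langle\cdot;\cdot\rangle_P)$ need not be complete when $P$ is only positive rather than strictly positive, so the closure and continuity manipulations must be executed with care in the $P$-norm; this is exactly why the companion Theorem 6.1 isolates the invertible case, in which $(\H,\langle\cdot;\cdot\rangle_P)$ is a genuine Hilbert space and the argument can be run without reservations.
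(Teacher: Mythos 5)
Your Stages 1 and 2 are correct and coincide with the paper's computations: the shift identities follow from the definitions together with $A^*PB=P$ (equivalently $B^*PA=P$), and the evaluation $\<\phi_m\,;\psi_n\>_P=\delta_{m,n}\<P^{-1}w\,;v\>$ is exactly the paper's. The problem is Stage 3, which is where the actual content of the theorem lives, and there your argument is only a gesture. You propose to view $(A,B)$ as a biisometric pair in the inner product space $({\H},\<\,\cdot\,;\cdot\,\>_P)$ and invoke the argument of \cite{KL1}. That argument needs two things that fail when $P$ is merely injective: (1) the existence of the $P$-adjoint $A^\sharp=P^{-1}A^*P$ as an everywhere-defined operator (it is defined only on $\{y: A^*Py\in\R(P)\}$), and (2) the identity $\N(A^\sharp)^{\perp_P}=\R(A)^{-_P}$, which is a completeness statement about $({\H},\<\,\cdot\,;\cdot\,\>_P)$. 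You name this obstacle yourself in your closing paragraph and then leave it standing; but the theorem is interesting precisely in the non-invertible case, so "the manipulations must be executed with care" is not a proof, and the reduction to \cite{KL1} as stated only covers the situation already handled by Theorem 6.1.

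The paper circumvents all of this by never leaving the original Hilbert space $({\H},\<\,\cdot\,;\cdot\,\>)$. Its Claim is that $\N(A^*)\cap\R(P)\not\perp P^{-1}(\N(B^*)\cap\R(P))$ \emph{in the original inner product} (which is exactly the nonvanishing of $\<P^{-1}w\,;v\>$). The proof: if $u=Ax$ lies in $P^{-1}(\N(B^*)\cap\R(P))$, then $PAx=w$ with $B^*w=0$, so $Px=B^*PAx=0$ and injectivity of $P$ forces $x=0$; hence $\R(A)\cap P^{-1}(\N(B^*)\cap\R(P))=\0$, and after passing to closures one uses $\R(A)^-=\N(A^*)^\perp$ --- legitimately, because this orthogonal complement is taken in the \emph{complete} space $({\H},\<\,\cdot\,;\cdot\,\>)$ --- to conclude $\N(A^*)\not\perp P^{-1}(\N(B^*)\cap\R(P))$; finally the density of $\R(P)$ in $\H$ is used to move the witness $v'\in\N(A^*)$ to a vector $v\in\N(A^*)\cap\R(P)$. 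To repair your proposal you would need to supply an argument of this kind (or an honest substitute for the missing orthogonal-decomposition step in the incomplete $P$-inner-product space); as written, the crucial existence of a pair $(v,w)$ with $\<P^{-1}w\,;v\>\ne0$ is not established.
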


\begin{proof}
Let ${\<\,\cdot\,;\cdot\,\>}$ be an inner product on $\H$, where
${(\H,\<\,\cdot\,;\cdot\,\>)}$ is a Hilbert space$.$ First suppose $A^*$ and
$B^*$ are noninjective (i.e., ${\N(A^*)\ne\0}$ and ${\N(B^*)\ne\0}$, which
is equivalent to saying that $A$ and $B$ have nondense ranges; see, e.g.,
\cite[Propositions 5.12 and 5.76]{EOT})$.$ Next suppose there exists an
injective nonnegative (thus self-adjoint) operator $P$ on $\H$ for which
$$
A^*PB=P
\qquad\;\hbox{(equivalently, $B^*PA=P$)},
$$
so that a trivial induction shows that, for every nonnegative integer
$n$,
$$
A^{*n}PB^n=P
\qquad\;\hbox{(equivalently, $B^{*n}PA^n=P$)}.
$$
Moreover, suppose
$$
\N(A^*)\cap\R(P)\ne\0
\quad\;\hbox{and}\;\quad
\N(B^*)\cap\R(P)\ne\0.
$$
As $P$ is an injective nonnegative operator, the form
${\<\,\cdot\,;\cdot\,\>_P\!:\H\times\H\to\CC}$ given by
$$
\<\,\cdot\,;\cdot\,\>_P=\<P\,\cdot\,;\cdot\,\>
$$
is another inner product in $\H.$ Take arbitrary nonzero vectors
$$
v\in\N(A^*)\cap\R(P)
\quad\;\hbox{and}\;\quad
w\in\N(B^*)\cap\R(P).
$$
Let ${P^{-1}\!:\R(P)\to\H}$ be the left inverse of $P$ and set
$$
y=P^{-1}v\ne0
\quad\;\hbox{and}\;\quad
z=P^{-1}w\ne0
$$
in $\H.$ Now take $\phi_n$ and $\psi_n$ in $\H$ defined for every nonnegative
integer $n$ by
$$
\phi_n=A^nz=A^nP^{-1}w
\quad\;\hbox{and}\;\quad
\psi_n=B^ny=B^nP^{-1}v.
$$
Another trivial induction shows that
$$
A\,\phi_n=A^{n+1}z=\phi_{n+1}
\quad\;\hbox{and}\;\quad
B\psi_n=B^{n+1}y=\psi_{n+1}.
$$
Therefore, since ${A^*PB=P}$, we also get
$$
A^*(P\psi_{n+1})
=A^*PB\psi_n=P\psi_n
\quad\;\hbox{and}\;\quad
B^*(P\phi_{n+1})=B^*PA\,\phi_n=P\phi_n.
$$
Now let ${m,n}$ be a pair of nonnegative integers$.$ If ${m<n}$, then
$$
\!\<\phi_m\,;\psi_n\>_P
\!=\!\<PA^m\kern-.5ptz\,;B^ny\>
\!=\!\<z\,;A^{*m}PB^{m}B^{n-m}\kern-.5pty\>
\!=\!\<Pz\,;B^{n-m}\kern-.5pty\>
\!=\!\<B^{*(n-m)}\kern-.5ptw\,;y\>
$$
and so ${\<\phi_m\,;\psi_n\>_P=0}$ since ${w\in\N(B^*)}$ implies
${w\in\N(B^{*(m-n)})}.$ Symmetrically,
$$
\<\phi_m\,;\psi_n\>_P
=\<B^{*n}PA^nA^{m-n}z\,;y\>
=\<PA^{m-n}z\,;y\>
=\<z\,;A^{*(m-n)}v\>
=0
$$
if ${n<m}$, since ${v\in\N(A^*)}.$ Moreover, for ${m=n}$,
$$
\<\phi_n\,;\psi_n\>_P
=\<PA^nz\,;B^ny\>
=\<z\,;A^{*n}PB^ny\>
=\<z\,;Py\>
=\<z\,;v\>
=\<P^{-1}w\,;v\>.
$$
Summing up.
$$
\<\phi_m\,;\psi_n\>_P=0
\;\;\hbox{whenever}\;\;
m\ne n
\quad\;\hbox{and}\;\quad
\<\phi_n\,;\psi_n\>_P=\<P^{-1}w\,;v\>.
$$
Next we proceed to show that there are ${v\in\N(A^*)\cap\R(P)}$ and
${w\in\N(B^*)\cap\R(P)}$ such that ${\<P^{-1}w\,;v\>\ne0}$, and so (as
${\N(A^*)\cap\R(P)}$ and ${\N(B^*)\cap\R(P)}$ are linear spaces), there exist
a pair ${(v,w)}$ with ${v\in\N(A^*)\cap\R(P)}$ and ${w\in\N(B^*)\cap\R(P)}$
such that ${\<P^{-1}w\,;v\>=1}.$ In this case (that is, for such a pair
${(v,w)}$ of vectors in ${\N(A^*)\cap\R(P)\times\N(B^*)\cap\R(P)}\kern.5pt$)
we get
$$
\<\phi_m\,;\psi_n\>_P=0
\;\;\hbox{if}\;\;
m\ne n
\quad\;\hbox{and}\;\quad
\<\phi_n\,;\psi_n\>_P=1,
$$
so that $\{\phi_n\}$ and $\{\psi_n\}$ are biorthogonal sequences on
${(\H,\<\,\cdot\,;\cdot\,\>_P)}.$ That there is such a pair ${(v,w)}$ for
which ${\<P^{-1}w\,;v\>\ne0}$ is a consequence of the following result.

\vskip9pt\noi
{\it Claim}\/.\hskip64pt
$\N(A^*)\cap\R(P)\not\perp P^{-1}(\N(B^*)\cap\R(P))$.

\vskip9pt\noi
{\it Proof of Claim}\/.
$\kern-1pt$Since ${A\kern-1pt\ne\kern-1ptO}$ (as
${A^*PB\kern-1pt=\kern-1ptP\!\ne\kern-1ptO}$), take
${0\kern-1pt\ne\kern-1ptu\in\kern-1pt\R(A)}$ so that ${u=\kern-1ptAx}$ for
some ${0\ne x\in\H}.$ Suppose ${u\in P^{-1}(\N(B^*)\cap\R(P))}.$ Then
$u=P^{-1}w$ for some ${w\in\R(P)}$ for which $B^*w=0.$ Thus $Ax=u=P^{-1}w$,
and so $PAx=PP^{-1}w=w$ (as ${w\in\R(P)}\kern.5pt).$ Then
$Px=P^*x=B^*PAx=B^*w=0$, so that ${x=0}$ (as ${\N(P)=\0}$), which is a
contradiction$.$ Hence, ${\R(A)\cap P^{-1}(\N(B^*)\cap\R(P))}=\0$, and so
${\R(A)^-\!\cap P^{-1}(\N(B^*)\cap\R(P))}=\0.$ Equivalently (as
${\R(A)^-\!=\N(A^*)^\perp}$),
$$
{\N(A^*)^\perp\cap P^{-1}(\N(B^*)\cap\R(P))}=\0.
$$
Suppose ${P^{-1}(\N(B^*)\cap\R(P))\perp\!\N(A^*)}.$ Then
${P^{-1}(\N(B^*)\cap\R(P))\sse\N(A^*)^\perp}\!.$ So
\begin{eqnarray*}
P^{-1}(\N(B^*)\cap\R(P))
&\kern-6pt=\kern-6pt&
P^{-1}(\N(B^*)\cap\R(P))\cap P^{-1}(\N(B^*)\cap\R(P))                  \\
&\kern-6pt\sse\kern-6pt&
\N(A^*)^\perp\cap P^{-1}(\N(B^*)\cap\R(P))=\0
\end{eqnarray*}
by the above identity, so that ${\N(B^*)\cap\R(P)=\0}$, which is a
contradiction$.$ Hence
$$
\N(A^*)\not\perp P^{-1}(\N(B^*)\cap\R(P)).
$$
Thus there exist ${v'\in\N(A^*)}$ and ${u\in P^{-1}(\N(B^*)\cap\R(P))}$ such
that ${\<v'\,;u\>\ne0}.$ Since $\R(P)$ is dense in $\H$, and since the inner
product is continuous, there exists a vector ${v\in\N(A^*)\cap\R(P)}$ such
that ${\<v\,;u\>\ne0}.$ Therefore
$$
{\N(A^*)\cap\R(P)\not\perp P^{-1}(\N(B^*)\cap\R(P))}.\qed
$$

\vskip2pt\noi
The above claim ensures that there are ${w\in\N(B^*)\cap\R(P)}$ and
${v\in\N(A^*)\cap\R(P)}$ such that ${\<P^{-1}w\,;v\>\ne0}$, concluding the
proof of the theorem.
\end{proof}

\vskip0pt
If the positive $P$ is surjective (i.e., if it is invertible), then the fact
that ${\R(P)=\H}$ simplifies condition (ii)$.$ The above theorem generalises
\cite[Theorem 3.1]{KL1} (also \cite[Theorem 3.1]{KL2})$.$ Indeed, by
setting $P=I$ we get the result in \cite{KL1} as a \hbox{particular case}.

\vskip6pt
The next corollary shows that a $P$-biisometric pair has a shift-like property
regarding biorthogonal sequences with respect to the inner product
${\<\,\cdot\,;\cdot\,\>_P}$ (i.e., with respect to $P$-biorthogonal sequences).

\begin{corollary}
\label{corollary1}
Let\/ $A$ and\/ $B$ be operators on a Hilbert space\/ $\H$ for which there
exists an injective nonnegative operator\/ $P$ on\ $\H$ such that
$$
A^*PB=P,
$$
and consider the biorthogonal sequences\/ $\{\phi_n\}$ and\/ $\{\psi_n\}$
defined in Theorem 5.1 in terms of nonzero vectors\/ ${v\in\N(A^*)\cap \R(P)}$
and\/ ${w\in\N(B^*)\cap \R(P)}.$ In addition, suppose these biorthogonal
sequences span\/ $\H$, and also suppose a vector\/ ${x\in\H}$ has series
expansion in terms of\/ $\{\phi_n\}$ and\/ $\{\psi_n\}.$ Then\/ this\/ $x$ can
be expressed as
$$
x={\sum}_{k=0}^{\infty}\<x\,;\psi_k\>_P\,\phi_k
={\sum}_{k=0}^{\infty}\<x\,;\phi_k\>_P\,\psi_k,
$$
and its image with respect to\/ $A$, $A^*$, $B$, and\/ $B^*$ can be
expressed as
$$
Ax={\sum}_{k=0}^{\infty}\<x\,;\psi_k\>_P\,\phi_{k+1}
\quad\;\hbox{and}\;\quad
Bx={\sum}_{k=0}^{\infty}\<x\,;\phi_k\>_P\,\psi_{k+1},
$$
$$
A^*x={\sum}_{k=0}^{\infty}\<x\,;\phi_{k+1}\>_P\,\psi_k
\quad\;\hbox{and}\;\quad
B^*x={\sum}_{k=0}^{\infty}\<x\,;\psi_{k+1}\>_P\,\phi_k.
$$
\end{corollary}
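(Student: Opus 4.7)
My plan is to derive all six identities by a single recipe: expand the target vector in one of the biorthogonal systems $\{\phi_n\}$, $\{\psi_n\}$ in $(\H,\<\,\cdot\,;\cdot\,\>_P)$ and read off the coefficients via biorthogonality. The inputs I rely on from Theorem 5.1 are the biorthogonality $\<\phi_m;\psi_n\>_P=\delta_{mn}$, the forward shifts $A\phi_n=\phi_{n+1}$ and $B\psi_n=\psi_{n+1}$, the backward shifts $A^*(P\psi_{n+1})=P\psi_n$ and $B^*(P\phi_{n+1})=P\phi_n$, and the boundary values $A^*(P\psi_0)=A^*v=0$ and $B^*(P\phi_0)=B^*w=0$ that come from $v\in\N(A^*)$ and $w\in\N(B^*)$.

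First I would handle the two expansions of $x$. By hypothesis $x=\sum_k a_k\phi_k$ converges in norm; pairing against $\psi_m$ in $\<\,\cdot\,;\cdot\,\>_P$ (justified by continuity of the inner product on a norm-convergent series) and invoking biorthogonality immediately gives $a_m=\<x;\psi_m\>_P$, and symmetrically $b_m=\<x;\phi_m\>_P$ for the $\psi$-expansion. The spanning hypothesis guarantees that these two expansions are unique.

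For $Ax$ and $Bx$, I would apply the bounded (hence continuous) operators $A$ and $B$ term-by-term to the $\phi$-expansion and the $\psi$-expansion of $x$ respectively, and substitute the forward-shift identities from Theorem 5.1. This produces the series for $Ax$ and $Bx$ without further work. For $A^*x$ (and symmetrically $B^*x$), my route is to pass through $P$: starting from $x=\sum_k\<x;\phi_k\>_P\psi_k$, multiply through by $P$ to obtain $Px=\sum_k\<x;\phi_k\>_P P\psi_k$, apply $A^*$ termwise using $A^*(P\psi_0)=0$ and $A^*(P\psi_k)=P\psi_{k-1}$ for $k\ge 1$, re-index $k\mapsto k+1$, and recognise the result as $P$ applied to $\sum_k\<x;\phi_{k+1}\>_P\psi_k$; the desired series identity for $A^*x$ is then read off in the inner product space $(\H,\<\,\cdot\,;\cdot\,\>_P)$. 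The $B^*x$ identity is obtained by the mirror argument using $B^*(P\phi_{k+1})=P\phi_k$ and $B^*w=0$.

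The main obstacle I anticipate sits in the last step: the backward-shift structure supplied by Theorem 5.1 lives on $\{P\psi_n\}$ and $\{P\phi_n\}$ rather than directly on $\{\psi_n\}$ and $\{\phi_n\}$, so the bookkeeping that moves the factor of $P$ back across $A^*$ (and across $B^*$) in order to arrive at the claimed termwise form is where most of the care must go. I expect the injectivity of $P$ and the hypotheses $v,w\in\R(P)$ (so that $P^{-1}v$ and $P^{-1}w$ exist in $\H$ as in Theorem 5.1) to be precisely what is needed to legitimise this passage cleanly.
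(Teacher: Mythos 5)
Your treatment of the expansions of $x$, $Ax$ and $Bx$ is correct and coincides with the paper's: read off the coefficients by biorthogonality and continuity of the inner product, then push the bounded operators $A$ and $B$ through the series and use $A\phi_k=\phi_{k+1}$, $B\psi_k=\psi_{k+1}$. The genuine gap is exactly where you predicted it, in the $A^*x$ and $B^*x$ identities, and your proposal does not close it. Your route yields
$$
A^*(Px)=\sum_{k=0}^{\infty}\<x\,;\phi_k\>_P\,A^*(P\psi_k)
=\sum_{k=1}^{\infty}\<x\,;\phi_k\>_P\,P\psi_{k-1}
=P\Big(\sum_{k=0}^{\infty}\<x\,;\phi_{k+1}\>_P\,\psi_k\Big),
$$
an identity of the form $A^*Px=PS$ with $S$ the claimed value of $A^*x$. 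To conclude $A^*x=S$ you must move $P$ from inside $A^*$ on the left-hand side to the outside, i.e.\ you need $A^*P=PA^*$ (equivalently, that $A^*$ is the adjoint relative to $\<\,\cdot\,;\cdot\,\>_P$ rather than to $\<\,\cdot\,;\cdot\,\>$). Neither the injectivity of $P$ nor $v,w\in\R(P)$ supplies this: injectivity only cancels a $P$ standing on the outside of \emph{both} sides. So the passage you flag as ``where most of the care must go'' is not bookkeeping but a missing hypothesis or idea.

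For comparison, the paper argues differently at this point: it expands $A^*x$ directly in $\{\psi_k\}$, writing $A^*x=\sum_k\<A^*x\,;\phi_k\>_P\,\psi_k$, and then invokes $\<A^*x\,;\phi_k\>_P=\<x\,;A\phi_k\>_P=\<x\,;\phi_{k+1}\>_P$. That adjoint identity is precisely the step you would need; note, however, that $\<A^*x\,;y\>_P=\<PA^*x\,;y\>$ while $\<x\,;Ay\>_P=\<A^*Px\,;y\>$, so it presupposes the very same interchange of $P$ and $A^*$ (automatic in the case $P=I$ of \cite{KL1}). Your derivation, which keeps the $P$'s explicit, in fact isolates the exact point at which the general-$P$ statement requires either this commutation or a reformulation of the last two formulas as $A^*(Px)=P\sum_k\<x\,;\phi_{k+1}\>_P\,\psi_k$ and $B^*(Px)=P\sum_k\<x\,;\psi_{k+1}\>_P\,\phi_k$, which is what your argument actually proves.
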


\proof
Take an arbitrary ${x\in\H}.$ To begin with, note that if a sequence in a
biorthogonal pair spans $\H$, then it does not necessarily follow that all
elements in $\H$ \hbox{have an} expansion as the limit of a linear combination
of elements of the sequence (cf$.$ \cite[Example 5.4.6]{Chr})$.$ Thus first
suppose the biorthogonal sequences $\{\phi_n\}$ and $\{\psi_n\}$ span $\H$
(i.e., $\bigvee\{\phi_n\}=\bigvee\{\psi_n\}=\H$)$.$ According to
\cite[p.537]{You}, $\{\phi_n\}$ admits a \hbox{biorthogonal} sequence if and
only if none of its elements is the limit of a linear combination of the
others, and in this case the biorthogonal sequence $\{\psi_n\}$ is uniquely
determined if and only $\bigvee\{\phi_n\}=\H.$ Thus assume first that the
biorthogonal pair $\big\{\{\phi_n\},\{\psi_n\}\big\}$ is unique in the above
sense$.$ In addition, also suppose that
$$
x={\sum}_{k=0}^{\infty}\alpha_k\phi_k
={\sum}_{k=0}^{\infty}\beta_k\psi_k
$$
for some pair of sequences of scalars $\{\alpha_n\}$ and $\{\beta_n\}.$ Then
$$
\alpha_n=\<x\,;\psi_n\>_P
\quad\;\hbox{and}\;\quad
\beta_n=\<x\,;\phi_n\>_P
$$
for every ${n\ge0}.$ Indeed, by the continuity of the inner product, and
recalling from Theorem 5.1 that $\{\phi_k\}$ and $\{\psi_k\}$ are biorthogonal
with respect to the inner product ${\<\,\cdot\,;\cdot\,\>_P}$, we get
$$
\<x\,;\psi_n\>_P={\sum}_{k=0}^{\infty}\alpha_k\<\phi_k\,;\psi_n\>_P=\alpha_n
\quad\,\hbox{and}\,\quad
\<x\,;\phi_n\>_P={\sum}_{k=0}^{\infty}\beta_k\<\psi_k\,;\phi_n\>_P=\beta_n
$$
for every ${n\ge0}.$ Recall again from Theorem 5.1 that for each ${n\ge0}$
$$
A\,\phi_n=\phi_{n+1}
\quad\;\hbox{and}\;\quad
B\psi_n=\psi_{n+1}.
$$
Using the above identities only (and the continuity of the inner product),
apply $A$ and $B$ to the expansions of $x$ in terms of $\{\phi_n\}$ and
$\{\psi_n\}$, respectively, and apply $A^*$ and $B^*$ to the expansions of $x$
in terms of $\{\psi_n\}$ and $\{\phi_n\}$, respectively, to get
$$
Ax={\sum}_{k=0}^{\infty}\<x\,;\psi_k\>_P\,A\,\phi_k
={\sum}_{k=0}^{\infty}\<x\,;\psi_k\>_P\,\phi_{k+1},
$$
$$
Bx={\sum}_{k=0}^{\infty}\<x\,;\phi_k\>_P\,B\psi_k
={\sum}_{k=0}^{\infty}\<x\,;\phi_k\>_P\,\psi_{k+1},
$$
$$
A^*x={\sum}_{k=0}^{\infty}\<A^*x\,;\phi_k\>_P\,\psi_k
={\sum}_{k=0}^{\infty}\<x\,;A\,\phi_k\>_P\,\psi_k
={\sum}_{k=0}^{\infty}\<x\,;\phi_{k+1}\>_P\,\psi_k,
$$
$$
B^*x={\sum}_{k=0}^{\infty}\<B^*x\,;\psi_k\>_P\,\phi_k
={\sum}_{k=0}^{\infty}\<x\,;B\psi_k\>_P\,\phi_k
={\sum}_{k=0}^{\infty}\<x\,;\psi_{k+1}\>_P\,\phi_k.           \eqno{\qed}
$$

\section{$(m,P)$-Biisometric Pair for a Strictly Positive $P$}

Throughout the paper the operator ${P\in\B[\H]}$ has been assumed positive
(i.e., non\-negative and injective) so that
${\<\,\cdot\,;\cdot\,\>_P}={\<P\,\cdot\,;\cdot\,\>}$ is an inner product
generating the norm ${\|\cdot\|_P}={\|P^\frac{1}{2}\cdot\|^\frac{1}{2}}.$
If $P$ is, in addition, surjective, so that it is invertible (i.e., strictly
positive), then the inner product space ${(\H,\<\,\cdot\,;\cdot\,\>_P)}$
becomes a Hilbert space whenever ${(\H,\<\,\cdot\,;\cdot\,\>)}$ is a Hilbert
space$.$

\vskip6pt
If $(A,B)$ is an $(m,P)$-biisometric pair and the positive $P$ is invertible,
then assuming that $L_{A^*}R_B$ is power bounded we get
$$
\triangle_{A^*,B}(P)=0
\quad\Longleftrightarrow\quad
A^*(PBP^{-1})=(P^{-1}A^*P)B=I,
$$
so that both $A$ and $B$ are left invertible$.$ Furthermore, if either of
$A^*$ and $B^*$ is injective, then both $A$ and $B$ are invertible: in
particular,
$$
{\N(A^*)}\ne\0
\quad\Longleftrightarrow\quad
{\N(B^*)}\ne\0,
$$
and $A$ is invertible if and only if $B$ is invertible$.$ The following
theorem shows that a stronger result than Theorem 5.1 is possible in the case
in which the operators $A$ and $B$ are power bounded$.$ But before that, some
notation and terminology is in order$.$ The numerical range $W(A)$ of an
operator $A$ is the set
$$
W(A)
=\big\{\lambda\in{\mathbb C}\!:\,\lambda=\<Ax\,;x\>,\;x\in\H,\;\|x\|=1\big\},
$$
and the numerical radius $w(A)$ of $A$ is
$$
w(A)={\rm sup} \{ |\lambda|\!:\,\lambda\in W(A)\}.
$$
The spectral radius $r(A)$ of $A$ is
$$
r(A)={\rm sup}\{|\lambda|\!:\,\lambda\in\sigma(A)\}
=\lim_{n\to\infty}{\|A^n\|^{\frac{1}{n}}}.
$$
$A$ is {\it normaloid}\/ if ${r(A)=\|A\|}$ (which implies $r(A)=w(A)=\|A\|$),
$A$ is {\it convexoid}\/ if the closure $\overline{W(A)}$ of the numerical
range of $A$ equals the convex hull ${{\rm conv}\kern.5pt\sigma(A)}$ of the
spectrum of $A$, and $A$ is {\it spectraloid}\/ if $r(A)=w(A)$
\cite[Problem 219]{Hal}$.$ It is well known that the classes consisting of
normaloid and convexoid operators are independent of each other, and that both
these classes are contained in the class of spectraloid operators.

\vskip6pt
We close the paper by showing that for an invertible positive $P$, a pair
$(A,B)$ of power bounded $(m,P)$-biisometric operators is such that either
$A$ and $B$ are both similar to the same unitary operator, or they satisfy the
conclusion of Theorem 5.1.

\begin{theorem}
\label{theorem2}
If\/ $\triangle^m_{A^*,B}(P)=0$ $($i.e.,\/ $(A,B)$ is an\/ $(m,P)$-biisometric
pair\/$)$ for some invertible positive\/ $($i.e., strictly positive\/$)$ $P$,
and\/ ${A,B}$ are power \hbox{bounded, then}
\begin{description}
\item{$\kern-4pt$\rm(a)}
either
\begin{description}
\item{$\kern-3pt$\rm(i)}
there exists a unitary operator\/ $U$ such that\/ $A$ and\/ $B$ are similar
to\/ $U$,
\end{description}
or
\vskip4pt
\begin{description}
\item{$\kern-4pt$\rm(ii)$\kern-2pt$}
$A$ and\/ $B$ satisfy the conclusions of Theorem 5.1\/ $($with some obvious
changes, since now\/ ${\R(P)=\H})$.
\end{description}
\vskip4pt
\item{$\kern-5pt$\rm(b)}
Furthermore, in case\/ {\rm(a-i)}, if\/ $A$ and\/ $A^{-1}\!$ $($or\/ $B$ and\/
$B^{-1}\kern-.5pt)$ are either \hbox{normaloid} or convexoid or spectraloid\/
$($all combinations are allowed\/$)$, then\/ $A$ is unitary\/
$($respectively,\/ $B$ is unitary$)$ and\/ ${B=P^{-1}AP}$ $($respectively,\/
$A=PBP^{-1})$.
\end{description}
\end{theorem}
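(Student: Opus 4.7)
The plan is a four-step strategy: (i) reduce the $(m,P)$-biisometric hypothesis to a $P$-biisometric one using power boundedness; (ii) split on whether $A^*$ is injective; (iii) in the non-injective case apply Theorem~\ref{theorem1}; and (iv) in the injective case invoke Sz.-Nagy's similarity theorem to produce a common unitary $U$, then leverage the normaloid/convexoid/spectraloid hypotheses in part~(b).

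First I would note that power boundedness of $A$ and $B$ gives power boundedness of $L_{A^*}R_B$ by Remark~\ref{remark}, so Theorem~\ref{theorem0} collapses the $(m,P)$-biisometric hypothesis to ${A^*PB=P}$ (equivalently ${B^*PA=P}$). Since $P$ is invertible, the discussion preceding the theorem already records that $A$ and $B$ are left invertible and that ${\N(A^*)\ne\0}$ if and only if ${\N(B^*)\ne\0}$. Case (a-ii), ${\N(A^*)\ne\0}$, is then immediate: because ${\R(P)=\H}$, the intersection hypotheses of Theorem~\ref{theorem1} collapse to the already-assumed conditions, and Theorem~\ref{theorem1} delivers (a-ii) with the obvious simplifications.

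For case (a-i), ${\N(A^*)=\0}$, left invertibility together with dense range (${\N(A^*)^\perp=\H}$) makes $A$ invertible, and symmetrically so is $B$. Solving ${A^*PB=P}$ produces ${A^{-1}=P^{-1}B^*P}$, whence ${\|A^{-n}\|\le\|P^{-1}\|\,\|P\|\,\|B^n\|}$ is bounded; symmetrically ${\|B^{-n}\|}$ is bounded. Thus $A,A^{-1},B,B^{-1}$ are all power bounded, and Sz.-Nagy's similarity theorem produces invertible $S$ and a unitary $U$ with ${A=S^{-1}US}$. Plugging this into the companion identity ${B=P^{-1}(A^*)^{-1}P}$ (also derived from ${A^*PB=P}$) and using ${U^{-1}=U^*}$ gives ${B=(S^{-*}P)^{-1}U(S^{-*}P)}$, exhibiting $B$ as similar to the \emph{same} unitary $U$; this is (a-i).

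For part (b), since $A$ is similar to $U$, the spectrum ${\sigma(A)=\sigma(U)}$ sits on the unit circle, so ${r(A)=r(A^{-1})=1}$. In the normaloid case ${\|A\|=r(A)=1=r(A^{-1})=\|A^{-1}\|}$, and the chain ${\|x\|=\|A^{-1}Ax\|\le\|A^{-1}\|\,\|Ax\|\le\|A\|\,\|x\|=\|x\|}$ forces $A$ to be an isometry; being invertible, $A$ is unitary. The convexoid and spectraloid branches both yield ${w(A)=w(A^{-1})=1}$ (convexoid operators satisfy $w=r$ because $|\cdot|$ attains its convex-hull supremum at extreme points, while spectraloid is $w=r$ by definition), and this, together with the two-sided power boundedness of $A$, forces ${\|A\|=\|A^{-1}\|=1}$, reducing again to the isometric argument. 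Once $A$ is unitary, ${A^*=A^{-1}}$ and ${A^*PB=P}$ rewrites as ${PB=AP}$, i.e.\ ${B=P^{-1}AP}$; the symmetric ``$B$ unitary'' assertion follows identically from ${B^*PA=P}$. The hard part, in my view, is the last step of the convexoid/spectraloid branch: passing from ${w(A)=r(A)=1}$ to ${\|A\|=1}$ for an operator only known to be similar to a unitary is the delicate point, requiring the precise geometric content of those classes combined with the two-sided power boundedness of $A$.
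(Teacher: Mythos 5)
Your part~(a) follows the paper's route essentially verbatim: Remark~4.1 plus Theorem~4.1 reduce the hypothesis to $A^*PB=P$; the case $\N(A^*)\ne\0$ (equivalently $\N(B^*)\ne\0$) is dispatched by Theorem~5.1 with $\R(P)=\H$; and in the injective case the two-sided bounds $\frac{1}{M_1}\|x\|\le\|A^nx\|\le M_2\|x\|$ yield similarity of $A$ to a unitary $U$ (the paper cites \cite[Proposition 4.2]{KD3}, i.e.\ the Sz.-Nagy criterion you invoke), after which $B=P^{-1}A^{*-1}P$ is conjugated to the same $U$ exactly as in the paper ($B=Q^{-1}UQ$ there, $(S^{-*}P)^{-1}U(S^{-*}P)$ in your version). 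The normaloid branch of your part~(b) is also fine and in fact more elementary than the paper's: $\|A\|=r(A)=1=r(A^{-1})=\|A^{-1}\|$ forces $A$ to be an invertible isometry, hence unitary, with no appeal to Stampfli's theorem or to Putnam--Fuglede.

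The gap is the convexoid/spectraloid branch of~(b). You correctly reduce those hypotheses to $w(A)=w(A^{-1})=r(A)=r(A^{-1})=1$, but then assert --- and yourself flag as the ``delicate point'' without supplying an argument --- that this, together with two-sided power boundedness, forces $\|A\|=\|A^{-1}\|=1$. That implication is not a routine fact: $w(A)=1$ by itself only gives $\|A\|\le 2$, and nothing in your text extracts the needed rigidity from $w(A^{-1})=1$ or from power boundedness. This is precisely the point the paper bridges by citing Stampfli's minimal range theorem \cite[Theorem 1]{St}: from $\sigma(A^{\pm1})\subseteq\partial{\mathbb D}$ and $\overline{W(A^{\pm1})}\subseteq{\rm conv}\kern.5pt\sigma(A^{\pm1})$ one concludes that $A$ is \emph{normal}, and then a Putnam--Fuglede argument identifies $A$ with the unitary $U$. (Stampfli's theorem needs the inclusion of the numerical range in the convex hull of the spectrum --- the convexoid-type condition --- not merely the equality $w=r$, so even with that citation the spectraloid case requires an extra word.) To repair your proof you must either actually prove the statement ``$w(A)=w(A^{-1})=1$ and $A$ similar to a unitary imply $\|A\|=1$'' or fall back on a normality result of Stampfli's type; as written, the conclusion of~(b) is not established in the convexoid and spectraloid cases, nor in the mixed combinations, all of which funnel through the same unproved step.
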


\begin{proof}
The power boundedness of $A$ and $B$ implies (the power boundedness of
$L_{A^*}R_B$, and hence)
$$
\triangle_{A^*,B}(P)=0
\quad\Longleftrightarrow\quad
A^*=PB^{-1}P^{-1}
\quad\Longleftrightarrow\quad A=P^{-1}B^{*{-1}}P
$$
and, for all $x\in\H$ and positive integers $n$,
\begin{eqnarray*}
A^*PB=P
&\Longrightarrow&
A^{*n}PB^nP^{-1}=I                                                    \\
&\Longrightarrow&
\|x\|\le\|P^{-1}B^{*n}P\|\,\|A^nx\|\le M_1\|A^nx\|\le M_1M_2\|x\|
\end{eqnarray*}
for some positive scalars $M_1$ and $M_2.$ But then 
$$
{\smallfrac{1}{M_1}}\|x\|\le\|A^nx\|\le M_2\|x\|
\quad\hbox{for all}\quad
x\in\H
$$
implies the existence of an invertible operator $S$ and an isometry $V$ such
that $SA=VS$ \cite[Proposition 4.2]{KD3}$.$ Thus, since $A^*S^*SA=S^*S$,
there exists an invertible posi\-tive operator $P_1$, $P^2_1=S^*S$, and an
isometry $U$ such that $A^*P_1=P_1U^*$, or, $P_1A=UP_1$. Similarly, since
$$
\|x\|\le\|P^{-1}A^{*n}P\|\,\|B^nx\|\le M_{11}\|B^nx\|\le M_{12}\|x\|
$$
for some positive scalars $M_{11}, M_{12}$ and all $x\in\H$, there exists an
invertible positive operator $P_2$ and an isometry $V_2$ such that
$P_2B=V_2P_2$.

\vskip6pt\noi
(a)
The operators $A$ and $B$ being left invertible, if neither of $A^*$ and
$B^*$ is injective, then $\N(A^*)$ and $\N(B^*)$ are nonzero, and since
${\R(P)}=\H$, the argument of the proof of Theorem 5.1 goes through to prove
(a-ii)$.$ If one of $A^*$ and $B^*$ is injective, then both $A$ and $B$ are
invertible, $A=P^{-1}_1UP_1$ with the isometry $U$ being a unitary operator
and $B=P^{-1}A^{*{-1}}P=P^{-1}P_1UP^{-1}_1P=Q^{-1}UQ$ for an invertible
operator $Q.$ This proves (a-i).

\vskip6pt\noi
(b)
Assume now that $A$ and $A^{-1}$ are either normaloid or convexoid or
spectraloid. Then, since $\sigma(A^{\pm 1})$ is a subset of the boundary
${\partial{\mathbb D}}$ of the unit disc ${\mathbb D}$,
$$
r(A^{\pm 1})=w(A^{\pm 1})=1,
$$
\vskip-4pt\noi
and hence
$$
W(A^{\pm 1})\subseteq {\rm conv}\kern.5pt\sigma(A^{\pm 1}).
$$
\vskip4pt\noi
This \cite[Theorem 1]{St} implies that $A$ is a normal operator$.$ Since
$$
A^*P_1=P_1U^*
\quad\Longleftrightarrow\quad
AP_1=P_1U
$$
by the Putnam-Fuglede commutativity theorem (\cite[p.104]{Hal}),
$$
A^*P^2_1=P_1U^*P_1=P^2_1A^* \Longrightarrow A^*P_1=P_1A^*
\Longrightarrow P_1U^*=P_1A^*\Longleftrightarrow U=A.
$$
Trivially, $B^{-1}=P^{-1}A^*P=P^{-1}U^*P$ implies $B=P^{-1}UP.$ Since a
similar argument works for the case in which $B$ is normaloid or convexoid or
spectraloid, the proof is complete.
\end{proof}

\bibliographystyle{amsplain}

\end{document}